\title{Extremal Problems on Forest Cuts and \\ Acyclic Neighborhoods in Sparse Graphs\footnote{\scriptsize This work started during the 6th WoPOCA (Workshop Paulista em Otimização, Combinatória e Algoritmos) in Campinas, Brazil. We thank the organizers for the opportunity to start new collaborations and the agencies that helped making the workshop possible: Conselho
Nacional de Desenvolvimento Científico e Tecnológico -- CNPq (404315/2023-2) and FAEPEX (2422/23).
F.~Botler would also like to thank the support of CNPq (304315/2022-2), CAPES (88887.878880/2023-00) and the São Paulo Research Foundation -- FAPESP (2024/14906-6).
Y.S.~Couto would also like to thank the support of FAPESP (2024/18049-0).
C.G.~Fernandes would also like to thank the support of CNPq (310979/2020-0).
V.F.~dos Santos would also like to thank the support of CNPq (312069/2021-9, 406036/2021-7 and 404479/2023-5) and FAPEMIG (APQ-01707-21). C.M.~Sato would also like to thank the support of CNPq (408180/2023-4).
E.F.~de Figueiredo would also like to thank the support of CAPES (Finance Code 001).
}
}
\author{
F.~Botler\thanks{\scriptsize Institute of Mathematics and Statistics, University of São Paulo, Brazil. E-mail: {\tt \{fbotler,cris,yancouto\}@ime.usp.br}.} \and
Y.~S.~Couto$^\dagger$ \and
C.~G.~Fernandes$^\dagger$ \and
E.~F.~de~Figueiredo\thanks{\scriptsize Department of Computer Science, Federal University of Minas Gerais, Brazil. E-mail: {\tt \{ederfgd,viniciussantos\} @dcc.ufmg.br}.} \and
R.~Gómez\thanks{\scriptsize Center for Mathematics, Computing and Cognition, Federal University of ABC, Brazil. E-mail: {\tt \{gomez.renzo,c.sato\} @ufabc.edu.br}.} \and
V.~F.~dos~Santos$^\ddagger$ \and
C.~M.~Sato$^\S$
}
\newcommand{\shorttitle}{Extremal Problems on Forest Cuts and Acyclic Neighborhoods in Sparse Graphs}
\def\@maketitle{%
  \newpage
  \begin{center}%
  \let \footnote \thanks
    {\small Proceedings of the 13th European Conference on Combinatorics, Graph Theory and Applications\\ EUROCOMB'25\\
    Budapest, August 25 - August 29, 2025
    }
    \vskip 0.5em
    \rule{\linewidth}{0.04cm}
    \vskip 3.2em
    {\LARGE \textbf{\textsc{\@title}} \par}%
    \vskip 1.2em
    {\textbf{\textsc{(Extended abstract)}} \par}
    \vskip 2.0em%
    {\large
      \lineskip .5em%
      \begin{tabular}[t]{c}%
        \@author
      \end{tabular}\par}%
  \end{center}%
  \par
  }
\newtheorem{theorem}{Theorem}
\newtheorem{corollary}[theorem]{Corollary}
\newtheorem{conjecture}[theorem]{Conjecture}
\newtheorem{lemma}[theorem]{Lemma}
\newtheorem{remark}[theorem]{Remark}
\newcommand*\oline[1]{%
  \vbox{%
    \hrule height 0.5pt%                  % Line above with certain width
    \kern0.25ex%                          % Distance between line and content
    \hbox{%
      \kern-0.1em%                        % Distance between content and left side of box, negative values for lines shorter than content
      \ifmmode#1\else\ensuremath{#1}\fi%  % The content, typeset in dependence of mode
      \kern0.1em%                        % Distance between content and left side of box, negative values for lines shorter than content
    }% end of hbox
  }% end of vbox
}
\tikzset{black vertex/.style={circle,draw,minimum size=1mm,inner sep=0pt,outer sep=2pt,fill=black, color=black}}
\begin{document}

\thispagestyle{empty}
\maketitle

\begin{abstract}
Chernyshev, Rauch, and Rautenbach proved that every connected graph $G$ on $n$ vertices for which $e(G) < \frac{11}5n-\frac{18}5$ has a vertex cut that induces a forest, and conjectured that the same remains true if $e(G) < 3n-6$ edges. 
We improve their result by proving that every connected graph on $n$ vertices for which $ e(G) < \frac94n-\frac{15}4$ has a vertex cut that induces a forest. We also study weaker versions of the problem that might lead to an improvement on the bound obtained. 

% Keywords:
% graph theory
% forest cut
% connectivity
% acyclic neighborhood

\vspace{2mm}
  {\noindent\small\bf DOI:}  {\tt https://doi.org/10.5817/CZ.MUNI.EUROCOMB25-000}
\end{abstract}

%%%%%%%%%%%%%%%%%%%%%%%%%%%%%%%%%%%%%%%%%%%%%%%%%%%%%%%%%%%%%%%%%%%%%%%%%%%%
%%%%%%%%%%%%%%%%%%%%%%%%%%%%%%%%%%%%%%%%%%%%%%%%%%%%%%%%%%%%%%%%%%%%%%%%%%%%
%%%%%%%%%%%%%%%%%%%%%%%%%%%%%%%%%%%%%%%%%%%%%%%%%%%%%%%%%%%%%%%%%%%%%%%%%%%%
\section{Introduction}

Let~$G$ be a connected graph. A set $S \subset V(G)$ is a \emph{vertex cut} if $G-S$ is disconnected. If $|S|=k$, we say $S$ is a \emph{$k$-vertex cut}. If $S$ is an independent set, we say $S$ is an \emph{independent cut}. Vertex cuts with special properties have been studied in different contexts. Chen and Yu~\cite{chen-yu} showed that every connected graph with less than $2n-3$ edges has an independent cut, confirming a conjecture due to Caro.
Recently, Chernyshev, Rauch, and Rautenbach proposed the following analogue conjecture, replacing independent set by forest~\cite[Conjecture~1]{chernyshev}.  A \emph{forest cut} is a vertex cut that induces a forest.
%They conjectured the following. 

\begin{conjecture}[Chernyshev--Rauch--Rautenbach, 2024]
\label{conj:3n}
  If~$G$ is a connected graph on $n$ vertices with no forest cut, then $e(G) \geq 3n-6$.
\end{conjecture}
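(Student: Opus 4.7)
The plan is induction on $n$, supposing a minimum counterexample $G$ with no forest cut and $e(G) < 3n - 6$. The base case $n = 4$ is immediate: a connected graph on four vertices with no forest cut must be $K_4$, and $e(K_4) = 6 = 3n - 6$. First observe that $G$ must be $3$-connected, since any vertex cut of size at most $2$ induces at most one edge and is hence a forest. In particular $\delta(G) \ge 3$; and since $\delta(G) \ge 6$ would give $e(G) \ge 3n > 3n - 6$, we may further assume $\delta(G) \in \{3, 4, 5\}$.

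I would first dispatch the case $\delta(G) = 3$. Let $v$ have degree $3$ with $N(v) = \{a,b,c\}$. For $n \ge 5$ the set $N(v)$ separates $v$ from the rest of $G$ and is therefore a vertex cut; since it is not a forest, $G[N(v)]$ must be the triangle $abc$, so $\{v\} \cup N(v)$ spans a $K_4$. I then claim that $G - v$ still has no forest cut. Indeed, given any putative forest cut $S'$ of $G - v$, the triangle structure forces $N(v) \setminus S'$ to lie entirely in a single component of $(G - v) - S'$; consequently either $S'$ itself remains a cut of $G$ (inducing the same forest), or $v$ has at most one neighbor in $S'$ and $S' \cup \{v\}$ works as a forest cut of $G$, since attaching $v$ as a pendant preserves the forest property. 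Either outcome contradicts the choice of $G$. Hence by induction $e(G - v) \ge 3(n-1) - 6$, and adding the three edges at $v$ gives $e(G) \ge 3n - 6$, the final contradiction.

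The main obstacle is the range $\delta(G) \in \{4, 5\}$. For $v$ of degree $4$, the fact that $N(v)$ is a cut only forces $e(G[N(v)]) \ge 4$, and the ``remove $v$'' reduction breaks down: if $|S' \cap N(v)| = 2$ and the two neighbors of $v$ in $S'$ lie in the same tree of the forest $G[S']$, then neither $S'$ nor $S' \cup \{v\}$ works (the former is not a cut of $G$, the latter contains a cycle). One would next try contracting an edge $uv$ with $u \in N(v)$: this reduces $n$ by $1$ and $e$ by $1 + |N(u) \cap N(v)|$, so the induction closes provided $|N(u) \cap N(v)| \ge 2$ and the contracted graph remains $3$-connected with no forest cut. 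Verifying the latter---and in particular controlling how forest cuts might appear after the contraction---is where I expect the hardest case analysis.

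If local reductions fall short, the natural backup is a discharging argument: assign each vertex initial charge $\deg(v) - 6$, so a total charge of at least $-12$ would imply $e(G) \ge 3n - 6$, and redistribute using the fact that low-degree vertices force many edges in their neighborhoods. The hard part will be the degree-$5$ vertices, whose neighborhoods need only carry a single cycle; this is presumably why previous attempts stall at the $\frac{11}{5}n - \frac{18}{5}$ and $\frac{9}{4}n - \frac{15}{4}$ thresholds, and closing the remaining gap to $3n - 6$ likely requires reducible configurations on pairs or short paths of adjacent low-degree vertices rather than a purely single-vertex discharging scheme.
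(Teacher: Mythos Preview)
This statement is Conjecture~\ref{conj:3n}, which the paper does \emph{not} prove: it is presented as an open conjecture, and the paper's contribution toward it is only the weaker bound $e(G)\ge \tfrac94 n - \tfrac{15}{4}$ of Theorem~\ref{thm:teo}\ref{thm:9-4}. There is therefore no ``paper's own proof'' to compare your attempt against.

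Your proposal is likewise not a proof. The $\delta(G)=3$ reduction you carry out is correct and amounts to showing that a minimum counterexample has no degree-$3$ vertex; this is subsumed by the known fact (Lemma~\ref{lem:todos}\ref{lem:4connected}, adapted from Chernyshev et al.) that a minimum counterexample is $4$-connected. The genuine gap is exactly where you say it is: for $\delta(G)\in\{4,5\}$ you do not give an argument. For degree~$4$ you observe that vertex deletion fails and sketch a contraction idea, but the crucial hypothesis---that the contracted graph still has no forest cut---is never established and is in fact the hard part. For degree~$5$ you propose discharging and then immediately concede that ``closing the remaining gap to $3n-6$ likely requires'' reducible configurations you do not provide. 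Your own final paragraph, noting that current techniques stall at $\tfrac{11}{5}n$ and $\tfrac94 n$, is an accurate assessment of the state of the art and precisely why Conjecture~\ref{conj:3n} remains open in the paper.
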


Chernyshev et al.~\cite{chernyshev} also showed that Conjecture~\ref{conj:3n} holds for some classes of graphs. 
For instance, they showed that a graph \(G\) with \(n\) vertices has a forest cut if
(i) \(G\) is a planar graph that is not triangulated; 
(ii) \(G\) has a universal vertex and \(e(G) < 3n-6\); or
(iii) \(G\) is connected and \(e(G) < \frac{11}{5}n-\frac{18}{5}\).
% any planar graph that is not triangulated has a forest cut, that any graph with a universal vertex and less than $3n-6$ edges has a forest cut, and that connected graphs with less than $\frac{11}{5}n-\frac{18}{5}$ edges have a forest cut. 

We say a graph is \emph{$k$-cyclic} if every vertex set of size at most $k$ is dominating or has a cycle in its neighborhood.  
% \textcolor{red}{For instance, every triangulated planar graph is 2-cyclic.}
Note that any (forest) cut disconnects the graph into at least two components, which are not dominating sets, 
and one of these components has less than~$n/2$ vertices.
So, Conjecture~\ref{conj:3n} claims that any~$(\frac{n-1}2)$-cyclic graph has at least~$3n-6$ edges.
Moreover, any 2-vertex cut is trivially a forest, so Chernyshev et al.~\cite{chernyshev} noted that finding good lower bounds for the number of edges on 1-cyclic 3-connected graphs would imply a result towards Conjecture~\ref{conj:3n}, and stated the following.
% Moreover, any 2-vertex cut is trivially a forest, and in fact Chernyshev et al.~\cite{chernyshev} showed that any minimal counterexample for Conjecture 1 is in fact 4-connected, noted that finding good lower bounds for the number of edges on 1-cyclic 3-connected graphs would imply a result towards Conjecture~\ref{conj:3n}, and stated the following.

\begin{conjecture}[Chernyshev--Rauch--Rautenbach, 2024]
\label{conj:construction}
  If~$G$ is a 3-connected graph on $n$ vertices such that there is a cycle in the neighborhood of every vertex, then $e(G) \geq \frac73n-\frac73$.
\end{conjecture}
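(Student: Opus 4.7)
The plan is a discharging argument preceded by a reduction to low minimum degree. First, if $\delta(G)\geq 5$, then
$$e(G)=\tfrac{1}{2}\sum_{v\in V(G)} \deg(v)\geq \tfrac{5n}{2}\geq \tfrac{7n-7}{3},$$
valid for every $n$, and we are done. Hence I may assume $G$ has a vertex of degree $3$ or $4$.

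Next, I would analyze the local structure forced by the $1$-cyclic condition around small-degree vertices. A vertex $v$ with $\deg(v)=3$ satisfies $G[N(v)]=K_3$ (the unique cycle on three vertices), so $\{v\}\cup N(v)$ induces a copy of $K_4$. Combined with $3$-connectedness (and $n\geq 5$), each of the three neighbors of $v$ must have a neighbor outside this $K_4$ and hence has degree at least $4$. A vertex $v$ with $\deg(v)=4$ has at least three edges in $G[N(v)]$ and therefore lies in at least three triangles. In both cases the neighborhood is locally dense, which is what the discharging will exploit.

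Finally, I would set up a discharging argument with initial charge $c(v)=\deg(v)-\tfrac{14}{3}$; the claim becomes $\sum_v c(v)\geq -\tfrac{14}{3}$. Vertices of degrees $3$ and $4$ carry deficits $\tfrac{5}{3}$ and $\tfrac{2}{3}$, while vertices of degree at least $5$ have non-negative charge. Guided by the previous step, I would push charge from high-degree vertices (or from the triangles and $K_4$'s forced around degree-$3$ vertices) along edges toward the deficit vertices, using the denseness of their neighborhoods to justify each transfer.

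The main obstacle is calibrating the rules to obtain the sharp constant $\tfrac{7}{3}$. A purely global counting argument (e.g.\ triangle counting via Kruskal--Katona) does not suffice, because forcing $n$ triangles only yields a bound of order $n^{2/3}$ on the number of edges; local information is essential. I also expect the hardest point to be handling clusters of degree-$3$ vertices whose forced $K_4$'s share triangles or edges, since there the naive rule transfers the same charge twice. A careful case analysis of $G[N(v)]$ for $\deg(v)\in\{3,4\}$, combined with $3$-connectedness to forbid pathological clusters, would form the technical core of the proof; a prerequisite would be pinning down the extremal graphs in order to calibrate the rules correctly.
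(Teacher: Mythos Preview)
The conjecture you are trying to prove is \emph{false}, and the paper does not prove it but rather disproves it. The paper constructs an infinite family of 3-connected graphs on $n$ vertices in which every vertex lies in a $K_4$ (hence has a cycle in its neighborhood) but with only $\tfrac{15}{8}n$ edges: start from any 3-connected cubic graph on $k$ vertices, replace each vertex by a copy of $K_4$, and attach the three former incident edges to three distinct vertices of that $K_4$. The result has $n=4k$ vertices and $\tfrac{3k}{2}+6k=\tfrac{15}{8}n$ edges, and $\tfrac{15}{8}<\tfrac{7}{3}$. The paper then proves the tight bound $e(G)\geq \tfrac{15}{8}n$ for this class (Theorem~1(b)).

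Your discharging scheme collapses on exactly this family: every vertex has degree $3$ or $4$, so with initial charge $c(v)=\deg(v)-\tfrac{14}{3}$ all charges are strictly negative and there is no source from which to push charge. Concretely, each $K_4$-block contributes one vertex of deficit $\tfrac{5}{3}$ and three of deficit $\tfrac{2}{3}$, totalling $\tfrac{11}{3}$ per block, so $\sum_v c(v)=-\tfrac{11k}{3}\to -\infty$. The structural observations you list are correct (a degree-$3$ vertex sits in a $K_4$; its neighbors have degree at least $4$), but they do not force any vertex of degree $\geq 5$, which is what your rules need. The ``pathological clusters'' you flag as the hard case are not a technical nuisance to be handled; they are the counterexample.
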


The conjecture addresses a proper subclass of \(1\)-cyclic graphs as it requires cycles in the neighborhood of universal vertices. However, it is functionally the same as for 1-cyclic graphs, as even Conjecture~\ref{conj:3n} holds for graphs with universal vertices~\cite{chernyshev}.
% For instance, $K_4-e$ is 1-cyclic and does not satisfy this requirement.
In this paper, we
improve the bound from~\cite{chernyshev} towards Conjecture~\ref{conj:3n},
disprove Conjecture~\ref{conj:construction}, and
present lower bounds on the number of edges for 3-connected graphs to be 1-cyclic and~2-cyclic.
% Também 4-connected né, mas a gente não menciona

\begin{comment}
\begin{theorem}\label{thm:9-4}
   If~$G$ is a connected graph on $n$ vertices and $m$ edges with no forest cut, then $m \geq \frac{9n-15}{4}$.
\end{theorem}

\begin{theorem}\label{thm:15-8}
  If~$G$ is a 3-connected 1-cyclic graph on $n \geq 6$ vertices and $m$ edges, then $m \geq \frac{15n}8$.
\end{theorem}

\begin{theorem}\label{thm:2n}
  If~$G$ is a 3-connected 2-cyclic graph on $n \geq 6$ vertices and $m$ edges, then $m \geq 2n$.
\end{theorem}
\end{comment}

\begin{theorem}\label{thm:teo}
  Let \(G\) be a graph on \(n\) vertices.
  Then the following hold.
    \begin{enumerate*}[label=(\alph*),itemjoin={\ }]
    \item\label{thm:9-4}If \(G\) is connected and has no forest cut, then $e(G) \geq \frac94n-\frac{15}4$;
    \item\label{thm:15-8}If \(G\) is \(3\)-connected, \(1\)-cyclic, and \(n\geq 6\), then $e(G) \geq \frac{15}8n$;
    \item\label{thm:2n}If \(G\) is \(3\)-connected, \(2\)-cyclic, and \(n\geq 6\), then $e(G) \geq 2n$.
    \end{enumerate*}
\end{theorem}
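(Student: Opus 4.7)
The three parts will be proved by discharging, driven by structural consequences of the hypotheses on low-degree vertices. The shared starting point is that under any of the three hypotheses, for every vertex $v$ of degree $3$ with non-dominating closed neighborhood, $N(v)$ induces a triangle, so $\{v\}\cup N(v)$ spans a copy of $K_4$; likewise every degree-$4$ vertex has a triangle or $C_4$ in its neighborhood. In part~\ref{thm:9-4}, $G$ is $3$-connected because any $1$- or $2$-vertex cut is a forest, and a short $3$-connectivity argument shows that each such $K_4$ contains exactly one degree-$3$ vertex: if two of its vertices had degree $3$, the remaining pair would form a $2$-cut separating them from the rest. The same conclusion holds in parts~\ref{thm:15-8} and~\ref{thm:2n}. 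For part~\ref{thm:2n}, the $2$-cyclic condition applied to a pair $\{v,a\}$ with $d(v)=3$ and $a\in N(v)$ further forces a cycle inside $N(\{v,a\})\setminus\{v,a\}$, which pins down the neighbors of $a$ outside the forced $K_4$ and makes the later charge bookkeeping almost immediate.

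Next I would set up discharging with initial charge $c(v)=d(v)-\tau$, where $\tau=\tfrac{9}{2}$ in part~\ref{thm:9-4}, $\tau=\tfrac{15}{4}$ in part~\ref{thm:15-8}, and $\tau=4$ in part~\ref{thm:2n}. The rules send a fixed amount of charge from each vertex of degree exceeding $\tau$ to each low-degree neighbor that shares a forced $K_4$ with it, together with a companion rule in part~\ref{thm:9-4} that pulls charge from the high-degree partners of the cycle lying inside the neighborhood of a degree-$4$ vertex. For part~\ref{thm:2n} the octahedron is a tight example and the accounting is direct. For part~\ref{thm:15-8}, two degree-$3$ vertices can still share the same triangle $\{a,b,c\}$, but in that case $3$-connectivity forces $d(a),d(b),d(c)\geq 5$, giving the donors enough surplus. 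For part~\ref{thm:9-4}, the additive term $-\tfrac{15}{4}$ is absorbed by exploiting the slack present on the smaller side of a minimum would-be forest cut.

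The main obstacle is part~\ref{thm:9-4}: the jump from $\tfrac{11}{5}$ to $\tfrac{9}{4}$ requires squeezing extra constraints out of the no-forest-cut hypothesis beyond the $1$-cyclic conclusion already used in~\cite{chernyshev}. I expect the basic discharging alone to fall short, so the proof will have to combine it with a careful local case analysis, ruling out tight configurations in which several degree-$3$ and degree-$4$ vertices share a small common neighborhood of high-degree vertices, using the stronger $k$-cyclic conditions for $k\geq 2$ that the no-forest-cut assumption supplies. Making those rules mesh cleanly with the boundary analysis on the small side of the cut will be the technical heart of the argument.
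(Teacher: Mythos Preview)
Your approach to parts~\ref{thm:15-8} and~\ref{thm:2n} is essentially what the paper does. The paper phrases it as edge-counting rather than discharging, but with initial charge $d(v)-\tau$ these are the same computation; the structural inputs you identify (degree-$3$ vertices sit in a $K_4$, have no degree-$3$ neighbor, and in the $2$-cyclic case have at least two neighbors of degree $\ge 5$) match Lemmas~\ref{lem:degree3+3nei} and~\ref{lem:degree3-2degree5}. One point you underspecify: the case where several degree-$3$ vertices hang off the same triangle is handled in the paper by a clean dichotomy (either $G\cong K^\triangle_{n-3}$, or every vertex has at least three neighbors of degree $\ge 4$), which is what makes the donor bound ``at most $j-3$ degree-$3$ neighbors'' work uniformly. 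Your version (``$3$-connectivity forces $d(a),d(b),d(c)\ge 5$'') is in the right direction but would need to be sharpened to that dichotomy to close the books.

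For part~\ref{thm:9-4} there is a genuine gap. You assert only $3$-connectivity, then build the whole plan around degree-$3$ vertices sitting in $K_4$'s. The paper's proof rests on a stronger fact: a \emph{minimum} counterexample is $4$-connected (Lemma~\ref{lem:todos}\ref{lem:4connected}), so there are \emph{no} degree-$3$ vertices at all and the entire analysis is about degree-$4$ vertices. This $4$-connectivity is not a direct consequence of ``no forest cut''---a $3$-cut inducing a triangle is not a forest---and requires a minimality reduction you do not mention. The key structural lemmas then concern degree-$4$ vertices: no $C_4$ in their neighborhood, no two of them in a common $K_4$, hence each has at most one degree-$4$ neighbor and each vertex of degree $\ge 5$ has at least two neighbors of degree $\ge 5$ (Lemma~\ref{lem:todos}\ref{lem:noC4}--\ref{lem:nod4insameK4}, Corollary~\ref{cor+lemma:no2d4}). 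These feed a short double count of the edges between degree-$4$ and degree-$\ge 5$ vertices that yields $e(G)\ge \tfrac{9}{4}n$ outright; the additive $-\tfrac{15}{4}$ is just slack from the form $\alpha(n-3)+3$, not something recovered from ``the smaller side of a minimum would-be forest cut''. Without the $4$-connectivity step your discharging has to serve both degree-$3$ and degree-$4$ sinks at threshold $\tau=\tfrac{9}{2}$, and the vague ``careful local case analysis'' you anticipate is precisely what the minimum-counterexample reduction and the degree-$4$ lemmas replace.
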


The $n \geq 6$ is necessary in Theorem~\ref{thm:teo}\ref{thm:15-8} and~\ref{thm:teo}\ref{thm:2n} as $K_5$ minus an edge is 3-connected and 2-cyclic (hence also 1-cyclic), has five vertices and nine edges, but $9 < \frac{15}8 \cdot 5 = \frac{75}8 < 10$.

\begin{remark}\label{rem:todas} There are infinite families of
\begin{enumerate*}[label=(\alph*),itemjoin={\ }]
\item\label{rm:3conn-1cyc}3-connected 1-cyclic graphs on~$n$ vertices with exactly~$\frac{15n}{8}$ edges and no universal vertices;
\item\label{rm:4conn-1cyc}4-connected 1-cyclic graphs on~$n$ vertices with exactly~$2n$ edges;
\item\label{rm:3conn-2cyc}3-connected 2-cyclic graphs on~$n$ vertices with exactly~$\frac94n$ edges;
\item\label{rm:4conn-2cyc}4-connected 2-cyclic graphs on~$n$ vertices with exactly~$\frac73n$ edges.
\end{enumerate*}
\end{remark}

Remark~\ref{rem:todas}\ref{rm:3conn-1cyc} disproves Conjecture~\ref{conj:construction}, proving that Theorem~\ref{thm:teo}\ref{thm:15-8} is asymptotically tight.
For Theorem~\ref{thm:teo}\ref{thm:2n}, we present a 3-connected 2-cyclic graph and a 4-connected 2-cyclic graph, both with 6 vertices and 12 edges, and, based on Remark~\ref{rem:todas}\ref{rm:4conn-2cyc}, we pose the following conjecture that would imply an improvement on Theorem~\ref{thm:teo}\ref{thm:9-4}, towards Conjecture~\ref{conj:3n}.

\begin{conjecture}\label{conj:73}
  If~$G$ is a 4-connected 2-cyclic graph on $n \geq 9$ vertices, then $e(G) \geq \frac73n$.
\end{conjecture}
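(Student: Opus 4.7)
The plan is to attack Conjecture~\ref{conj:73} by a discharging argument that simultaneously exploits the 4-connectivity and the 2-cyclic hypothesis. Since 4-connectivity forces $\delta(G)\geq 4$, a trivial double count yields only $e(G)\geq 2n$ (matching Theorem~\ref{thm:teo}\ref{thm:2n} in the 3-connected setting); to reach $\frac{7}{3}n$ one must extract an extra $\frac{n}{3}$ edges from the hypotheses, that is, an average gain of $\frac{2}{3}$ per vertex. I would therefore assign initial charge $\deg(v)$ to each $v$ and design rules that rebalance to at least $\frac{14}{3}$ everywhere.

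The first substantive step is to analyze the structure around a vertex $v$ in the set $V_4$ of degree-$4$ vertices. Since $G$ is 1-cyclic, $G[N(v)]$ contains a cycle; as $|N(v)|=4$, this cycle is either a triangle plus an isolated neighbor or a $4$-cycle. In both cases every vertex of $N(v)$ has at least three neighbors inside $\{v\}\cup N(v)$, so combined with $\delta(G)\geq 4$ each has at least one neighbor outside. Next I would use 2-cyclicity on pairs $\{u,v\}$ with $\deg(u)=\deg(v)=4$: because $n\geq 9$, the closed neighborhood of such a pair contains at most $8<n$ vertices and is therefore not dominating, so $N(u)\cup N(v)$ contains a cycle. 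This should forbid lean configurations in which two adjacent degree-$4$ vertices share most of their neighborhood, forcing either additional edges inside $N(u)\cup N(v)$ or the presence of a vertex of degree at least $5$ close by.

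From these local constraints I would derive a small set of discharging rules: every vertex of degree $d\geq 5$ transfers a fixed amount $\alpha$ along each edge leading to $V_4$, together with a secondary rule that moves charge across the triangles or $4$-cycles sitting inside $G[N(v)]$ for $v\in V_4$, so that even when all four neighbors of $v$ lie in $V_4$ the deficiency of $\frac{2}{3}$ at $v$ is compensated by the (forced) rich local structure. Verification then reduces to checking, for each vertex $u$ of degree $d$, that it loses at most $d-\frac{14}{3}$ charge, with a case split according to $d$ and to how many neighbors of $u$ lie in $V_4$.

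The hardest part, I expect, is the case where $V_4$ forms a large connected cluster whose neighbors are themselves mostly in $V_4$: there the simplest ``push from a high-degree neighbor'' scheme breaks down and the 2-cyclic constraint on pairs must carry essentially all the weight, so the argument probably becomes semi-global rather than strictly local. The tight examples promised by Remark~\ref{rem:todas}\ref{rm:4conn-2cyc} should be dissected first, as they both pin down the correct value of $\alpha$ and indicate how large a cluster in $V_4$ can actually grow; I would not be surprised if the final proof also requires an auxiliary counting argument, for instance on triangles incident to $V_4$, to close the last gap.
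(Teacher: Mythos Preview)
There is nothing to compare against: in the paper Conjecture~\ref{conj:73} is stated as an \emph{open} conjecture, not a theorem. The paper supplies no proof; it only (i) gives the tight constructions of Remark~\ref{rem:todas}\ref{rm:4conn-2cyc} showing that $\frac{7}{3}n$ would be best possible, and (ii) exhibits small $4$-connected $2$-cyclic graphs on $7$ and $8$ vertices to justify the hypothesis $n\geq 9$.

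Your submission is accordingly not a proof but a strategy sketch, and it has both a local error and the global gap you yourself flag. The local error: in the ``triangle plus isolated neighbour'' configuration for a degree-$4$ vertex $v$, the isolated neighbour has only $v$ among $\{v\}\cup N(v)$, not three neighbours there, so the sentence ``In both cases every vertex of $N(v)$ has at least three neighbors inside $\{v\}\cup N(v)$'' is false. The global gap is the one you name at the end: when $V_4$ induces a large connected region whose neighbours are again in $V_4$, there is no high-degree vertex nearby to push charge from, and your outline gives no concrete mechanism (beyond the hope that $2$-cyclicity on pairs will somehow suffice) to recover the missing $\frac{2}{3}$ per vertex. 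Since the paper leaves the conjecture open, this is precisely the obstacle any genuine proof would have to overcome; as it stands your proposal is a reasonable heuristic roadmap but not evidence that the conjecture is true.
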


In Section~\ref{sec:forestcut}, we prove Theorem~\ref{thm:teo}\ref{thm:9-4}.  
In Section~\ref{sec:1cyclic}, we prove Theorem~\ref{thm:teo}\ref{thm:15-8}-\ref{thm:2n}, 
and Remark~\ref{rem:todas}.
A recent independent work by Li, Tang, and Zhan~\cite{li2024minimum} contains results similar to the ones on 1-cyclic graphs in Section~\ref{sec:1cyclic}.
% (also disproving Conjecture~\ref{conj:construction}).
Due to space constraints, we omit a few proofs.

\section{Avoiding forest cuts}\label{sec:forestcut} 

% In this section, we prove \Cref{thm:9-4}. 
Chernyshev et al.~\cite{chernyshev} proved that a connected graph on $n$ vertices with no forest cut must have at least $\frac{11n}5-\frac{18}5$ edges. 
% In their proof, they present properties that must be satisfied by any  counterexample with a minimum number of vertices. 
For that, they studied properties its counterexamples with a minimum number of vertices.
Such properties are in fact shared with a minimum counterexample to Theorem~\ref{thm:teo}\ref{thm:9-4} and Conjecture~\ref{conj:3n}. To help the exposition, we state a conjecture parameterized by a number $\alpha$ with $2 \leq \alpha \leq 3$. 

\begin{conjecture}[$\alpha$-FC Conjecture]\label{conj:alpha}
  \emph{If~$G$ is a connected graph on $n$ vertices with no forest cut, then $e(G) \geq \alpha (n - 3) + 3$.}
\end{conjecture}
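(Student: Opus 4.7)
The plan is to approach the $\alpha$-FC Conjecture by analyzing a vertex-minimum counterexample $G$, refining the structural framework of Chernyshev et al.~\cite{chernyshev}. The ultimate target is the case $\alpha = 9/4$, which is Theorem~\ref{thm:teo}\ref{thm:9-4}; formulating the problem with a parameter $\alpha$ is convenient because the structural reductions on minimum counterexamples do not depend on the specific bound one is trying to achieve.

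First, I would establish the basic properties of a minimum counterexample $G$. Since any set of at most two vertices induces a forest, $G$ must be 3-connected and satisfy $\delta(G) \geq 3$. For any 3-vertex cut $S$ of $G$, the induced subgraph $G[S]$ must contain a cycle and hence be a triangle, giving strong local constraints on each small cut. A useful baseline is that for $\alpha = 2$ the conjecture already follows from the Chen--Yu theorem on independent cuts, since every independent set is a forest; the question is how far this can be pushed.

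The core of the argument is a reduction across a 3-vertex cut. Given a 3-cut $S = \{u,v,w\}$ with $G[S]$ a triangle, and a component $C$ of $G - S$, consider the auxiliary graph $G_C := G[V(C) \cup S]$. By the minimality of $G$, either $e(G_C) \geq \alpha(|V(G_C)| - 3) + 3$, or $G_C$ admits a forest cut $F$; in the latter case one checks that $F$, possibly augmented by vertices from the triangle, lifts to a forest cut of $G$ itself, contradicting our hypothesis. Summing the resulting density lower bounds over the components of $G - S$ together with the three edges of $G[S]$ then yields $e(G) \geq \alpha(n - 3) + 3$.

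The main obstacle, I expect, is sharpening the analysis of small components enough to push $\alpha$ from $11/5$ to $9/4$. The critical case is when a component $C$ of $G - S$ has only a few vertices (say $|V(C)| \in \{1,2,3,4\}$) that are heavily joined to $S$: here the generic density bound coming from $G_C$ is barely larger than $2$, which is the Chen--Yu threshold, so closing the gap to $9/4$ requires extracting one or two extra edges from each such configuration. I would try to do this by showing that every such dense attachment pattern either admits an \emph{internal} forest cut of $G$, ruling the configuration out entirely, or creates a second triangle 3-cut whose joint contribution can be charged more tightly by a combined local count. Making this case analysis sharp, without losing edges through double-counting across overlapping triangle cuts, is the technical bottleneck and is precisely where the improvement over Chernyshev et al.\ has to come from.
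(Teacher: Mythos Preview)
Your plan targets the wrong bottleneck. The 3-cut reduction you describe --- showing that each side $G_C$ of a triangle 3-cut either satisfies the $\alpha$-bound by minimality or has a forest cut that lifts to $G$ --- is precisely the argument that a minimum counterexample has no 3-cut at all, i.e., is 4-connected. This step is already in Chernyshev et al.\ (their Claim~1, restated here as Lemma~\ref{lem:todos}\ref{lem:4connected}) and works uniformly for every $\alpha \le 3$; once it succeeds there are simply no components $C$ of $G-S$ left to analyze, so the ``small components heavily joined to $S$'' you single out as critical never occur in the minimum counterexample. The gap between $\alpha = 11/5$ and $\alpha = 9/4$ does not live in this reduction.

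The real work begins \emph{after} 4-connectivity is established. A 4-connected graph can have as few as $2n$ edges, so 4-connectivity alone yields only $\alpha = 2$. The paper's improvement is a degree-counting argument on the 4-connected counterexample: using the forest-cut hypothesis locally, one shows that no degree-4 vertex has a $C_4$ in its neighborhood and no two degree-4 vertices share a $K_4$ (Lemma~\ref{lem:todos}\ref{lem:noC4} and~\ref{lem:todos}\ref{lem:nod4insameK4}). These force every degree-4 vertex to have at least three neighbors of degree $\ge 5$, and every vertex of degree $\ge 5$ to have at least two neighbors of degree $\ge 5$ (Corollary~\ref{cor+lemma:no2d4}). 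Double-counting the edges between the two classes then gives $e(G) \ge \tfrac{9}{4}n$. Your proposal contains no analogue of this step, and without it the plan cannot move past $\alpha = 2$.
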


Note that Theorem~\ref{thm:teo}\ref{thm:9-4} is the same as the $\frac94$-FC Conjecture, Chernyshev et al.~\cite{chernyshev} proved the $\frac{11}5$-FC Conjecture and Conjecture~\ref{conj:3n} is the same as the $3$-FC Conjecture. 
For $2 \leq \alpha \leq 3$, a minimum counterexample to the $\alpha$-FC Conjecture is a graph \(G\) on $n$ vertices with no forest cut, ${e(G) < \alpha(n-3) + 3}$ and $n$ as small as possible.
The following lemma is used in the proof of Theorem~\ref{thm:teo}\ref{thm:9-4}.

\begin{lemma}\label{lem:todos}
  Let~$G$ be a minimum counterexample to the $\alpha$-FC Conjecture, for $2 \leq \alpha \leq 3$. Then
  \begin{enumerate*}[label=(\alph*),itemjoin={\ }]
  \item\label{lem:4connected}$G$ is 4-connected and has at least $8$ vertices;
  \item\label{lem:noC4}no degree-4 vertex in~$G$ has a $C_4$ in its neighborhood; and
  \item\label{lem:nod4insameK4}no two degree-4 vertices are in the same~$K_4$ in~$G$.
  \end{enumerate*}
\end{lemma}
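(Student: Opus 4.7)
The plan is, for each of the three claims, to run a minimum-counterexample argument: assume the structure forbidden by the claim appears in $G$, use it to exhibit a smaller connected graph that, by the minimality of $G$, must either satisfy the $\alpha$-FC bound or have a forest cut, and in both cases derive a contradiction --- either with $e(G) < \alpha(n-3) + 3$, or with $G$ having no forest cut.

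For \ref{lem:4connected}, any cut of size at most $2$ induces a forest, so $G$ is $3$-connected. A $3$-cut $T = \{a,b,c\}$ that is not a triangle has at most two edges on three vertices and is therefore a forest; hence we may assume any $3$-cut is a triangle. Split $G$ along $T$ into $G_A = G[A \cup T]$ and $G_B = G[B \cup T]$, where $A, B$ partition $V(G) \setminus T$ into the two sides of the cut; both are smaller than $G$ and connected. By the minimality of $G$, each $G_i$ (for $i \in \{A, B\}$) either has a forest cut or satisfies $e(G_i) \geq \alpha(|V(G_i)|-3) + 3$. A forest cut of $G_i$ can be lifted to one of $G$ using that $T$ separates $A$ from $B$, a contradiction; otherwise, summing the two edge bounds and subtracting the three edges of $T$ counted in both yields $e(G) \geq \alpha(n-3) + 3$, again contradicting the counterexample status of $G$. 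For $n \geq 8$: $4$-connectivity gives $e(G) \geq 2n$, while the counterexample bound gives $e(G) < \alpha(n-3) + 3 \leq 3n - 6$; cases $n \leq 6$ contradict immediately, and for $n = 7$ the only $4$-connected graph with $e \leq 14$ is $\overline{C_7}$, in which four consecutive vertices of the complementary $7$-cycle form a forest cut.

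For \ref{lem:noC4} and \ref{lem:nod4insameK4}, suppose the forbidden structure exists: in \ref{lem:noC4}, a degree-$4$ vertex $v$ with $C_4$ in its neighborhood; in \ref{lem:nod4insameK4}, two degree-$4$ vertices $u, v$ sharing a $K_4$. Set $G^* = G - v$ and $G^* = G - \{u,v\}$, respectively. By the $4$-connectivity of $G$, the graph $G^*$ is connected; deleting $v$ loses $4$ edges in \ref{lem:noC4} and deleting $\{u,v\}$ loses $7$ edges in \ref{lem:nod4insameK4}. By the minimality of $G$, $G^*$ either has a forest cut or satisfies $e(G^*) \geq \alpha(|V(G^*)|-3) + 3$; combined with the count of lost edges and the hypothesis $\alpha \leq 3$, the latter forces $e(G) \geq \alpha(n-3) + 3$, a contradiction. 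Hence $G^*$ has a forest cut $S^*$, and we lift $S^*$ to a forest cut of $G$ by a case analysis on how $N(v)$ (resp.\ $N(u) \cup N(v)$) distributes across $S^*$ and the components of $G^* - S^*$, choosing appropriately from $S^*$, $S^* \cup \{v\}$, $S^* \cup \{u\}$, or a small local swap.

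The main technical obstacle is this last lifting step. Re-inserting the deleted vertex or vertices may merge components of $G^* - S^*$, breaking the cut property, or, when added to $S^*$, close a cycle through $S^*$, breaking the forest property. The $C_4$ hypothesis in \ref{lem:noC4} and the $K_4$ hypothesis in \ref{lem:nod4insameK4} are exactly what rule out all obstructive configurations: they impose enough local structure on $N(v)$ (resp.\ $N(u) \cup N(v)$) to guarantee that, under every distribution of these neighbors among $S^*$ and the components of $G^* - S^*$, some valid lift exists. The remaining bookkeeping is then routine.
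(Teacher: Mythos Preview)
The paper is an extended abstract and does not include a proof of this lemma; it only remarks that part~\ref{lem:4connected} is adapted from Claim~1 of Chernyshev--Rauch--Rautenbach and that parts~\ref{lem:noC4} and~\ref{lem:nod4insameK4} strengthen their Claim~2. So there is no paper proof to compare against, and your outline is exactly the standard minimum-counterexample strategy one would expect the full version to use.

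Your treatment of~\ref{lem:4connected} is essentially complete: the triangle $3$-cut split with the edge count $e(G)=e(G_A)+e(G_B)-3$ works (note that a forest cut $S$ of $G_A$ cannot contain all of $T$, since $T$ is a triangle, and the remaining vertices of $T$ lie in a single component of $G_A-S$; this is the point that makes the lift go through). The $n\ge 8$ argument via $\overline{C_7}$ is correct.

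For~\ref{lem:noC4} and~\ref{lem:nod4insameK4}, however, the proposal has a real gap precisely at the step you call ``routine bookkeeping.'' The easy cases do go through: for~\ref{lem:noC4}, if at most one vertex of the $C_4$ lies in $S^*$, or if two \emph{consecutive} vertices do, the remaining neighbours are joined by $C_4$-edges and $S^*$ lifts directly; if three or four lie in $S^*$ one either lifts directly or contradicts that $G^*[S^*]$ is a forest. The genuinely delicate case is when the two \emph{opposite} vertices $a,c$ of the $C_4$ lie in $S^*$ while $b,d$ lie in distinct components of $G^*-S^*$ and $a,c$ belong to the same tree of $G^*[S^*]$: then $S^*$ fails (it is not a cut of $G$), $S^*\cup\{v\}$ fails (it closes the cycle $v\,a\,\ldots\,c\,v$), and swaps such as $(S^*\cup\{v\})\setminus\{a\}$ or $(S^*\cup\{v\})\setminus\{w\}$ for $w$ on the $a$--$c$ path typically reconnect the two sides once $a$ (or $w$) is released. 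An analogous obstruction occurs in~\ref{lem:nod4insameK4} when both $x,y\in S^*$ (so $S^*\cup\{u\}$ and $S^*\cup\{v\}$ contain the triangle $uxy$ or $vxy$) and $u',v'$ lie in different components. Resolving these cases needs extra input---for instance, that $N(v)$, respectively $N(\{u,v\})$, is itself a $4$-cut of $G$ and therefore induces a cycle, which constrains the adjacencies among $a,b,c,d$ (resp.\ $x,y,u',v'$) beyond the assumed $C_4$/$K_4$---and this is exactly where the work is. Your sentence ``the $C_4$/$K_4$ hypotheses are exactly what rule out all obstructive configurations'' asserts the conclusion rather than proving it; as written, the proposal is a correct plan but not yet a proof.
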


Lemma~\ref{lem:todos}\ref{lem:4connected}  was adapted from the proof of Claim~1 in Chernyshev et al.~\cite{chernyshev}.
They~\cite[Claim~2]{chernyshev} also proved that, in a minimum counterexample to Conjecture~\ref{conj:3n}, every degree-4 vertex has at most two neighbors of degree 4. 
Lemma~\ref{lem:todos}\ref{lem:noC4} and~\ref{lem:todos}\ref{lem:nod4insameK4} are strengthenings of this statement. 
Lemma~\ref{lem:todos}\ref{lem:noC4} implies that every degree-4 vertex in a minimum counterexample to the $\alpha$-FC Conjecture lies in a $K_4$, and we deduce the following   
% In what follows, we denote by $N_G(v)$ the set of neighbors of a vertex $v$ in a graph~$G$. If~$G$ is clear from the context, we write simply $N(v)$. Similarly, $N(S)$ is the set of neighbors of a set $S$ of vertices. 
from Lemma~\ref{lem:todos}\ref{lem:nod4insameK4}.

\begin{corollary}\label{cor+lemma:no2d4}
Let~$G$ be a minimum counterexample to the $\alpha$-FC Conjecture, for $2 \leq \alpha \leq 3$.
Then the following hold:
  \begin{enumerate*}[label=(\alph*),itemjoin={\ }]
    \item\label{cor:no2d4neighbors}every degree-4 vertex in~$G$ has at most one degree-4 neighbor; and
    \item\label{lem:dgr5-at-least-two-degree-5-neighbors}each vertex with degree at least 5 in~$G$ has at least two neighbors of degree at least~5.
  \end{enumerate*}
\end{corollary}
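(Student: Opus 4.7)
The plan is to exploit the $K_4$-structure around degree-$4$ vertices guaranteed by Lemma~\ref{lem:todos}, in particular the observation (made in the paragraph preceding the corollary) that every degree-$4$ vertex of a minimum counterexample lies in a $K_4$.

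For part~\ref{cor:no2d4neighbors}, I would let $v$ be a degree-$4$ vertex and choose a $K_4$ of the form $\{v,a,b,c\}$ containing it. By Lemma~\ref{lem:todos}\ref{lem:nod4insameK4}, $a,b,c$ all have degree~$\geq 5$, so the only possible degree-$4$ neighbor of~$v$ is the unique neighbor outside the triangle $\{a,b,c\}$. This gives at most one.

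For part~\ref{lem:dgr5-at-least-two-degree-5-neighbors} I would proceed by contradiction. Assume $v$ has degree $d\geq 5$ but at most one neighbor of degree~$\geq 5$; then $v$ has at least $d-1 \geq 4$ degree-$4$ neighbors $u_1,\dots,u_{d-1}$ together with at most one degree-$\geq 5$ neighbor $v'$. For each $u_i$, pick a $K_4$ $K_i$ containing $u_i$. The key claim is that $v \notin K_i$: otherwise $K_i=\{u_i,v,y,z\}$ and Lemma~\ref{lem:todos}\ref{lem:nod4insameK4} forces $y,z$ to have degree~$\geq 5$, producing two degree-$\geq 5$ neighbors of~$v$ and contradicting the assumption. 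From $v \notin K_i$ and $\deg(u_i)=4$ it follows that $N(u_i)=\{v,a_i,b_i,c_i\}$ with $\{a_i,b_i,c_i\}$ a triangle of degree-$\geq 5$ vertices (again by Lemma~\ref{lem:todos}\ref{lem:nod4insameK4}); in particular no $u_j$ lies in $N(u_i)$, so the $u_i$'s are pairwise non-adjacent.

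To finish, I would show that $N(v)$ itself is a forest cut. A minimum counterexample cannot have a universal vertex (Conjecture~\ref{conj:3n} is known for such graphs and $e(G) < \alpha(n-3)+3 \leq 3n-6$), so $d \leq n-2$, $v$ is isolated in $G-N(v)$, and $N(v)$ is a vertex cut. The subgraph induced on $N(v)=\{u_1,\dots,u_{d-1}\}\cup\{v'\}$ has no $u_iu_j$-edge and only possibly edges from $v'$ to some of the $u_i$'s, hence it is a star (or edgeless) and in particular a forest, contradicting minimality. The hard part is the step ruling out $v \in K_i$; once that is in hand, the rigid description of $N(u_i)$ it provides makes the independence of the $u_i$'s---and therefore the forest structure of $N(v)$---automatic.
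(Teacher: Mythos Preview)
Your argument is correct and matches the approach the paper indicates (deducing both parts from Lemma~\ref{lem:todos}\ref{lem:nod4insameK4} via the $K_4$ containing each degree-$4$ vertex). The only cosmetic point is the labeling in part~\ref{lem:dgr5-at-least-two-degree-5-neighbors}: when all $d$ neighbors of $v$ have degree~$4$ your list should be $u_1,\dots,u_d$ rather than $u_1,\dots,u_{d-1}$, but the argument goes through verbatim in that case as well (indeed $G[N(v)]$ is then edgeless).
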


Corollary~\ref{cor+lemma:no2d4}\ref{lem:dgr5-at-least-two-degree-5-neighbors} is also a strengthening of a result of Chernyshev et al.~\cite[Claim 3]{chernyshev}.
We conclude this section with the proof of Theorem~\ref{thm:teo}\ref{thm:9-4}.

\begin{proof}[Proof of Theorem~\ref{thm:teo}\ref{thm:9-4}]
Suppose~$G$ is a minimum counterexample to Theorem~\ref{thm:teo}\ref{thm:9-4}, and hence to the $\frac94$-FC Conjecture. 
Let~$n$ be the number of vertices of~$G$, 
and \(n_i\) be the number of degree-\(i\) vertices in~\(G\).
By Lemma~\ref{lem:todos}\ref{lem:4connected}, $G$ is 4-connected and \(n = \sum_{i = 4}^{n-1} n_i \geq 8\).
Let \(F_4\) be the set of edges joining degree-4 vertices to vertices with degree at least 5.
By Corollary~\ref{cor+lemma:no2d4}\ref{cor:no2d4neighbors}, we have that \({|F_4| \geq 3n_4}\).
By Corollary~\ref{cor+lemma:no2d4}\ref{lem:dgr5-at-least-two-degree-5-neighbors}, 
each degree-\(j\) vertex in~$G$ with \(j\geq 5\) contributes with at most \(j-2\) edges to \(F_4\), and hence \(|F_4| \leq \sum_{j = 5}^{n-1} (j-2)n_j\).
Now, since~\(j-2 \leq 6j-27\) for \(j \geq 5\),
we have \( 3n_4 \leq \sum_{j = 5}^{n-1} (j-2)n_j \leq \sum_{j=5}^{n-1} (6j-27)n_j = 6(2e(G) - 4n_4) - 27(n-n_4) = 12e(G) + 3n_4 - 27n
\),
so \(e(G) \geq 9n/4\), a contradiction.
% \begin{align*}
%     2e(G)  & %= \ \sum_{j=4}^{n-1} j\cdot n_j 
%         %\ = \ 4n_4 + \sum_{j=5}^{n-1} j\cdot n_j 
%          \ = \ 4n_4 + \frac{9}{2}\sum_{j=5}^{n-1} n_j + \sum_{j=5}^{n-1} \big(j-\frac{9}{2}\big)\cdot n_j %\\ 
%           \geq \ 4n_4 + \frac{9}{2}\sum_{j=5}^{n-1} n_j + \frac{1}{6}\sum_{j=5}^{n-1} (j-2)\cdot n_j
%         \ \geq \ 
%         %4n_4 + \frac{9}{2}\sum_{j=5}^{n-1} n_j + \frac{n_4}{2} 
%         %\ = \ 
%         %\frac92\sum_{j=4}^{n-1}{n_j} \ = \ 
%         \frac{9}{2}n. \qedhere
% \end{align*}
\end{proof}

\section{Bounds for 1-cyclic and 2-cyclic graphs}\label{sec:1cyclic}\label{sec:2cyclic}

First, we present a family of counterexamples to Conjecture~\ref{conj:construction} and prove Remark~\ref{rem:todas}\ref{rm:3conn-1cyc}. 
%Wormald~\cite{wormald1981} proved that almost all~$r$-regular graphs are~$r$-connected. 
Take any 3-connected 3-regular graph (see~\cite{wormald1981}) with~$k$ vertices and replace each vertex with a~$K_4$, 
connecting each of its neighbors to a distinct vertex in the~$K_4$ 
and leaving only one vertex of each $K_4$ with degree~3 (see, e.g., Figure~\ref{fig:blowupK4Petersen}). 
We obtain a 3-connected graph $G$ with precisely~$n=4k$ vertices and~${m = \frac{3k}{2} + 6k = \frac{15}{8}n}$ edges. 
Moreover, $G$ is \(1\)-cyclic because each of its vertices is in a \(K_4\).
% Each of its vertices has a cycle in its neighborhood because every vertex is in a $K_4$.
%
% \begin{comment}
\begin{figure}[ht]
    \centering
    \scalebox{0.7}{\begin{tikzpicture}[scale = .5]
    \foreach \i in {1,...,3}{
        \node(x\i) [black vertex] at (30+\i*360/3:3.3) {};
    }	
    \foreach \i in {1,...,3}{
	  \node(y\i) [black vertex] at 
        (30+\i*360/3:2.55) {};	
    }
    \foreach \i in {1,...,3}{
	  \node(w\i) [black vertex] at
        (30+11+\i*360/3:3.8) {};	
    }
    \foreach \i in {1,...,3}{
	  \node(z\i) [black vertex] at
        (30-11+\i*360/3:3.8) {};	
    }
 
    \foreach \i in {1,...,3}{
        \draw[edge] (x\i) -- (y\i);
        \draw[edge] (x\i) -- (w\i);
        \draw[edge] (x\i) -- (z\i);
        \draw[edge] (y\i) -- (w\i);
        \draw[edge] (w\i) -- (z\i);
        \draw[edge] (z\i) -- (y\i);
    }

    \draw[edge] (z1) -- (w3);
    \foreach \i in {1,...,2}{
        \pgfmathtruncatemacro{\j}{\i + 1}
        \draw[edge] (w\i) -- (z\j);
    }

    \foreach \i in {1,...,3}{
	  \node(x\i) [black vertex] at
        (30+\i*360/3:0.8) {};	
    }
	\node(x0) [black vertex] at (30:0) {};
 
    \draw[edge] (x0) -- (x1);
    \draw[edge] (x0) -- (x2);
    \draw[edge] (x0) -- (x3);
    \draw[edge] (x1) -- (x2);
    \draw[edge] (x1) -- (x3);
    \draw[edge] (x2) -- (x3);

    \draw[edge] (y1) -- (x1);
    \draw[edge] (y2) -- (x2);
    \draw[edge] (y3) -- (x3);
\end{tikzpicture}}
    \caption{A counterexample to Conjecture~\ref{conj:construction} built from $K_4$.}
    \label{fig:blowupK4Petersen}
\end{figure}
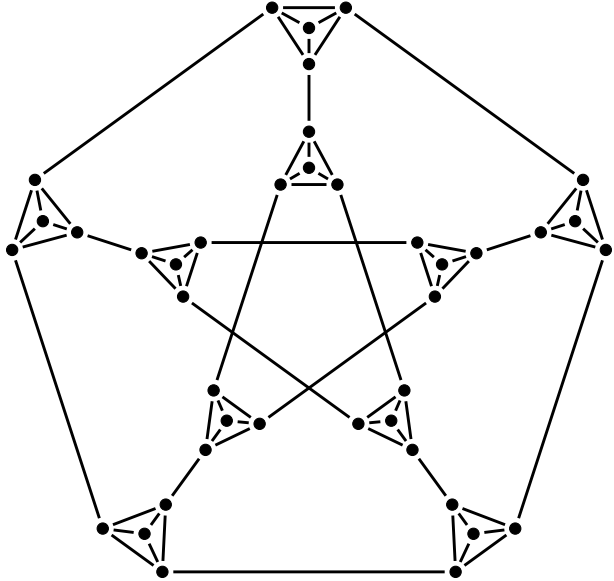
% \end{comment}
%

Remark~\ref{rem:todas}\ref{rm:3conn-1cyc} shows that Theorem~\ref{thm:teo}\ref{thm:15-8} is tight. We denote by \(K^\triangle_s\) the graph obtained from \(K_3\) by adding \(s\) new vertices adjacent to the three vertices of the \(K_3\).  The proof of Theorem~\ref{thm:teo}\ref{thm:15-8} uses the following lemma, whose proof we omit.
%
% \begin{lemma}\label{lem:degree3}
%   If~$G$ is a 3-connected 1-cyclic graph on $n \geq 5$ vertices, then every degree-3 vertex has no degree-3 neighbor. 
% \end{lemma}
% \begin{comment}
% \begin{proof}
%   Let $v$ be a degree-3 vertex, which is not universal in $G$ because $n \geq 5$. Thus, $v$ forms a $K_4$ with its neighbors since $G$ is 1-cyclic. If one of the neighbors of~$v$ has degree~3, the other two neighbors form a 2-vertex cut in~$G$, which contradicts the fact that~$G$ is 3-connected. 
% \end{proof}
% \end{comment}
%
% In what follows denote by \(K^\triangle_s\) the graph obtained from \(K_3\) by adding \(s\) new vertices adjacent to the three vertices of the \(K_3\).
%
% \begin{lemma}\label{lem:3nei}
%   If~$G$ is a 3-connected 1-cyclic graph on $n \geq 5$ vertices, then either~$G$ is isomorphic to $K^\triangle_{n-3}$ or every vertex of \(G\) has at least three neighbors of degree at least~4. 
% \end{lemma}
\begin{lemma}\label{lem:degree3+3nei}
  If~$G$ is a 3-connected 1-cyclic graph on $n \geq 5$ vertices.
  Then the following hold:
  \begin{enumerate*}[label=(\alph*),itemjoin={\ }]
    \item\label{lem:degree3}every degree-3 vertex has no degree-3 neighbor; and 
    \item\label{lem:3nei}either~$G$ is isomorphic to $K^\triangle_{n-3}$ or every vertex of \(G\) has at least three neighbors of degree at least~4. 
  \end{enumerate*}
\end{lemma}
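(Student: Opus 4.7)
The main tool is the $1$-cyclic condition applied to vertices whose open neighborhood has size exactly $3$, which then must induce a triangle. For part~(a), I would take two adjacent degree-$3$ vertices $v$ and $a$ and label the remaining neighbors of $v$ as $b, c$. Since $n \geq 5$, $v$ is not universal, so the $1$-cyclic property yields a triangle in $N(v) = \{a, b, c\}$, giving the edges $ab$, $ac$, $bc$, and hence $N(a) = \{v, b, c\}$ (as $\deg(a) = 3$). Thus $\{v, a, b, c\}$ induces a $K_4$, and $\{b, c\}$ separates $\{v, a\}$ from $V(G) \setminus \{v, a, b, c\}$, which is non-empty since $n \geq 5$. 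This $2$-vertex cut contradicts $3$-connectedness.

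For part~(b), suppose some vertex $v$ has at most two neighbors of degree at least $4$; I aim to show $G \cong K^\triangle_{n-3}$. Part~(a) immediately forbids $\deg(v) = 3$, so $\deg(v) \geq 4$. Partition $N(v) = A \cup B$ into the neighbors of degree at least $4$ and those of degree exactly $3$ (well-defined since $\delta(G) \geq 3$ by $3$-connectedness). Then $|A| \leq 2$ and $|B| \geq 2$. Applying the $1$-cyclic condition to any $b \in B$ (non-universal since $\deg(b) = 3 < n-1$) forces $N(b)$ to induce a triangle through $v$; the two other neighbors of $b$ lie in $N(v)$ and are adjacent, so part~(a) rules out both being in $B$ and forces both to lie in $A$. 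This pins $A = \{a_1, a_2\}$, gives the edge $a_1 a_2$, and fixes $N(b) = \{v, a_1, a_2\}$ for every $b \in B$.

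To finish, I would use $3$-connectedness once more to establish $V(G) = \{v, a_1, a_2\} \cup B$: any outside vertex $u$ would be non-adjacent both to $v$ (since $u \notin N(v)$) and to every $b \in B$ (by the description of $N(b)$), so $\{a_1, a_2\}$ would be a $2$-vertex cut separating the star $\{v\} \cup B$ from $u$. With $V(G)$ identified and $|B| = n-3$, the graph is $K^\triangle_{n-3}$ with core triangle $\{v, a_1, a_2\}$. The main obstacle I anticipate is the bookkeeping to ensure that the pair $\{a_1, a_2\}$ attached to each $b \in B$ is uniform across $B$, which is forced automatically by $|A| \leq 2$; as a by-product, $\deg(v)$ turns out to equal $n-1$, so $v$ itself is universal in the resulting graph.
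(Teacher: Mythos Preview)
Your argument is correct and follows essentially the same line as the paper's (commented-out) proof: for part~(b) you reduce to $\deg(v)\ge 4$ via part~(a), apply the $1$-cyclic condition to each degree-$3$ neighbour $b$ of $v$ to force $N(b)$ to be a triangle through $v$, use part~(a) again to place the other two neighbours of $b$ inside the at-most-two high-degree neighbours of $v$, and conclude uniformity. Your write-up is in fact more complete than the paper's sketch: you supply the proof of part~(a) and the final $3$-connectedness step showing $V(G)=\{v,a_1,a_2\}\cup B$, both of which the paper leaves implicit.
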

\begin{comment}
\begin{proof}
  Let \(v\) be a degree-\(k\) vertex.
  If \(k = 3\), then the statement follows by Lemma~\ref{lem:degree3}.
  So, we may assume that~\(k \geq 4\).
  Let \(N_3\) be the set of degree-\(3\) neighbors of \(v\),
  and suppose that \(v\) has at most two neighbors with degree at least \(4\).
  For each \(u\in N_3\), let \(N(u) = \{v,x_u,y_u\}\).
  By Lemma~\ref{lem:degree3}, both $x_u$ and $y_u$ have degree at least~4.
  Moreover, \(N(u)\) induces a triangle in \(G\), and thus~$x_u, y_u \in N(v) \setminus N_3$.
  Since \(v\) has at most two neighbors with degree at least \(4\),
  we must have \(\{x_u,y_u\} = \{x_w,y_w\}\) for every \(u,w\in N_3\).
  Therefore,~$G$ is isomorphic to $K^\triangle_{n-3}$.
\end{proof}
\end{comment}

% We are ready to prove Theorem~\ref{thm:teo}\ref{thm:15-8}. 

\begin{proof}[Proof of Theorem~\ref{thm:teo}\ref{thm:15-8}]
 Let~$G$ be a 3-connected 1-cyclic graph on $n \geq 6$ vertices, and~$n_i$ be the number of degree-$i$ vertices in~$G$.   
 By Lemma~\ref{lem:degree3+3nei}\ref{lem:3nei}, 
 either \(G\) is isomorphic to \(K^\triangle_{n-3}\) or every vertex of \(G\) has at least three neighbors of degree at least~4.
 In the former case, as desired, $e(G) = 3n-6>\frac{15}8n$ as~$n \geq 6$.
 In the latter case, as $4j-15 \geq j-3$ for $j \geq 4$, we have
 $3n_3 \leq \sum_{j=4}^{n-1}(j-3)n_j \leq \sum_{j=4}^{n-1}(4j-15)n_j = 8e(G) - 15n + 3n_3$, i.e., $e(G) \geq \frac{15}8n$.
\end{proof}

% A $K_5$ minus an edge is a 3-connected 1-cyclic graph on 5 vertices and 9 edges, but $9 < 15 \cdot \frac58 = \frac{75}8$. This explains the requirement that $n \geq 6$ in Theorem~\ref{thm:teo}\ref{thm:15-8}.

% \medskip

% Observe that any \(4\)-connected \(1\)-cyclic graph on \(n\) vertices 
% has minimum degree \(4\), and hence has at least \(2n\) edges.
% A 4-connected graph on $n$ vertices has at least $2n$ edges. So if~$G$ is a 4-connected 1-cyclic graph on $n \geq 6$ vertices, then $e(G) \geq 2n$. 
Note that if we pick an arbitrary 4-connected 4-regular graph and replace each of its vertices by a $K_4$, leaving all vertices of each $K_4$ with degree~4,
then the graph obtained is 4-connected, 4-regular, and 1-cyclic. 
Therefore, the lower bound \(e(G) \geq 2n\) is best possible for 4-connected 1-cyclic graphs, and proves Remark~\ref{rem:todas}\ref{rm:4conn-1cyc}. 
%
% \section{Bounds for 2-cyclic graphs}\label{sec:2cyclic}
%
Now, we prove a lower bound on the number of edges for a 3-connected graph to be 2-cyclic. Specifically, we prove Theorem~\ref{thm:teo}\ref{thm:2n}. 
We start by proving some properties of 3-connected 2-cyclic graphs. 

\begin{lemma}\label{lem:degree3-2degree5}
  Let~$G$ be a 3-connected 2-cyclic graph on $n\geq 6$ vertices. 
  Then every degree-3 vertex has at least two neighbors of degree at least 5. 
\end{lemma}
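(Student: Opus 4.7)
The plan is to fix a degree-$3$ vertex $v$ with $N(v)=\{a,b,c\}$, assume for contradiction that at most one of $a,b,c$ has degree at least $5$, and derive a $2$-vertex cut contradicting the $3$-connectivity of $G$.

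First I would set up the local structure around $v$. Since $\deg(v)=3$ and $n\geq 6$, $v$ is not a dominating vertex, so the $1$-cyclic property (implied by the $2$-cyclic hypothesis) forces $G[N(v)]$ to contain a cycle; with $|N(v)|=3$ this means $\{a,b,c\}$ spans a triangle. Combined with Lemma~\ref{lem:degree3+3nei}\ref{lem:degree3}, each of $a$, $b$, $c$ has degree at least $4$. Under the contradiction hypothesis I may then assume $\deg(a)=\deg(b)=4$, so I may write $N(a)=\{v,b,c,d\}$ and $N(b)=\{v,a,c,e\}$ for some $d,e\notin\{v,a,b,c\}$.

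The key step is to apply the $2$-cyclic condition to the pair $\{v,a\}$. Its closed neighborhood is $\{v,a,b,c,d\}$, of size only $5$, so $\{v,a\}$ is not dominating (since $n\geq 6$); hence $G[N(\{v,a\})]=G[\{b,c,d\}]$ contains a cycle and must therefore be a triangle. In particular, $bd\in E(G)$, and because the unique neighbor of $b$ outside $\{v,a,c\}$ is $e$, this forces $d=e$; and the edge $cd$ of the same triangle also holds.

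At this point $\{v,a,b,c,d\}$ induces a copy of $K_5$ minus the edge $vd$, and both $a$ and $b$ have all of their neighbors inside this set. Removing $\{c,d\}$ therefore disconnects $\{v,a,b\}$ from $V(G)\setminus\{v,a,b,c,d\}$, which is nonempty because $n\geq 6$, producing a $2$-vertex cut and contradicting the $3$-connectivity of $G$. The main obstacle is precisely the key step: recognizing that applying $2$-cyclicity to the pair $\{v,a\}$ is the right move, as it forces the external neighbors $d$ and $e$ of $a$ and $b$ to coincide and rigidly pins down a $K_5$-minus-an-edge structure; once this rigidity is established, the $2$-cut $\{c,d\}$ is essentially immediate from the degree constraints.
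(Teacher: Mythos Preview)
Your argument is correct and follows the same path as the paper's: set up the triangle on $N(v)=\{a,b,c\}$, assume $\deg(a)=\deg(b)=4$, and apply the $2$-cyclic condition to $\{v,a\}$ to force the fourth neighbors of $a$ and $b$ to coincide at a single vertex $d$. The only difference is in the finishing step: the paper applies $2$-cyclicity a second time, now to the pair $\{a,b\}$, so that $N(\{a,b\})=\{v,c,d\}$ must span a triangle, yielding the nonexistent edge $vd$; you instead read off the $2$-cut $\{c,d\}$ from the rigid $K_5$-minus-an-edge structure and invoke $3$-connectivity. Both endings are immediate once $d=e$ is established, so the approaches are essentially the same.
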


% XXX: Usamos N({v, x}) sem definir antes aqui.
\begin{proof}
  Let $v$ be a degree-3 vertex in~$G$, and $x$, $y$, and $z$ be its neighbors. By Lemma~\ref{lem:degree3+3nei}\ref{lem:degree3}, these three vertices have degree at least 4, and they form a triangle, because \(n\geq 5\) and~$G$ is 1-cyclic.  Suppose, for a contradiction, that $x$ and $y$ have degree~4. 
  Then the neighborhood $N(\{v,x\}) = \{y,z,w\}$, where~$w$ is the other neighbor of~$x$. 
  As $n \geq 6$ and~$G$ is 2-cyclic, $y,x,w$ form a triangle, and~$w$ is also the other neighbor of~$y$. But then~$N(\{x,y\}) = \{v, z, w\}$, which must form a cycle because~$n \geq 6$. However there is no edge~$vw$, a contradiction.
\end{proof}

% We proceed with the proof of Theorem~\ref{thm:teo}\ref{thm:2n}.

\begin{proof}[Proof of Theorem~\ref{thm:teo}\ref{thm:2n}]
  Let $n_i$ be the number of degree-$i$ vertices in~$G$ and \(F\) be the set of edges joining degree-3 vertices to vertices with degree at least~5. 
  %Let us estimate the size of~\(F\).
  By Lemma~\ref{lem:degree3-2degree5}, we have that~\(|F| \geq 2 n_3\).
  By Lemma~\ref{lem:degree3+3nei}\ref{lem:3nei}, 
  either \(G\) is isomorphic to \(K^\triangle_{n-3}\) or every vertex of \(G\) has at least three neighbors of degree at least~4.
  In the former case, $G$ has $3n-6 \geq 2n$ edges as~$n \geq 6$.
  In the latter case, each degree-\(j\) vertex for $j \geq 5$ contributes with at most \(j-3\) edges to $F$,
  so \(|F|\leq \sum_{j = 5}^{n-1} (j-3)n_j\).
  %Therefore \(\sum_{j = 5}^{n-1} \frac{j-3}{2}n_j \geq n_3\). 
  As \(2j-8 \geq j-3\) for \(j\geq 5\), we have
  \(
    2n_3 \leq |F| \leq \sum_{j = 5}^{n-1} (j-3)n_j
    \leq \sum_{j = 5}^{n-1} (2j-8)n_j
 %   = 2(2e(G) - 4n_4 - 3n_3) - 8(n-n_4-n_3)
    = 4e(G) - 8n + 2n_3
  \),
  i.e., \(e(G) \geq 2n\).
  % as desired.
  % \begin{align*}
  %   2m \ = \ \sum_{j = 3}^{n-1} j\cdot n_j 
  %       & \ = \ 3n_3 +  \sum_{j = 4}^{n-1} j\cdot n_j 
  %       \ = \ 3n_3 + 4\sum_{j = 4}^{n-1} n_j + \sum_{j = 5}^{n-1} (j-4)\cdot n_j \\
  %       &\ \geq \ 3n_3 + 4\sum_{j = 4}^{n-1} n_j + \sum_{j = 5}^{n-1} \frac{j-3}{2}\cdot n_j 
  %       \ \geq \ 3n_3 + 4\sum_{j = 4}^{n-1} n_j + n_3 
  %       %\ = \ 4\sum_{j=3}^{n-1}n_j 
  %       \ = \ 4n. \qedhere
  %   \end{align*}
\end{proof}

In Figure~\ref{fig:2n-edges}, on the left, we show two tight examples  for Theorem~\ref{thm:teo}\ref{thm:2n}: the graph $K^\triangle_3$, which is 3-connected, 
and the octahedral graph, which is 4-connected. 
The third graph in Figure~\ref{fig:2n-edges} has $9$ vertices and $20$ edges.
Consider the construction illustrated in Figure~\ref{fig:blowupK4Petersen}, starting from a 3-connected 3-regular graph on $k$ vertices. If we replace each vertex by an octahedral graph instead of a $K_4$, we end up with a 3-connected 2-cyclic graph on $6k$ vertices and~$\frac32k + 12k = \frac{27}2k = \frac94n$ edges, which proves Remark~\ref{rem:todas}\ref{rm:3conn-2cyc}.
As far as we know, it may hold that~$m \geq \frac94n$ for the graphs addressed by Theorem~\ref{thm:teo}\ref{thm:2n} if~$n \geq 10$. The requirement~$n \geq 10$ is necessary to exclude the third graph in Figure~\ref{fig:2n-edges}, because~$\frac{20}{9} < \frac94$.

The lower bound on the number of edges in a 4-connected 2-cyclic graph might be larger. 
Take a 4-connected 4-regular graph on~$k$ vertices,
and replace each of its vertices by an octahedral graph, leaving precisely four vertices of each octahedral graph with degree~5. 
The graph obtained is 4-connected, 2-cyclic, has~$6k$ vertices and~$m = 2k+12k=14k = \frac73n$ edges.
This proves Remark~\ref{rem:todas}\ref{rm:4conn-2cyc},
which shows that Conjecture~\ref{conj:73} is tight. 
In Figure~\ref{fig:2n-edges}, on the right, we show a 4-connected 2-cyclic graph on $7$ vertices and $16$ edges, and two 4-connected 2-cyclic graphs with $8$ vertices and $18$ edges. 
Since $\frac{16}7$ and $\frac{18}8$ are less than $\frac73$, these examples justify the condition $n \geq 9$ in Conjecture~\ref{conj:73}.

% This, and the family of graphs described next, proving Remark~\ref{rem:todas}\ref{rm:4conn-2cyc}, led us to propose Conjecture~\ref{conj:73}, which would imply a better result towards Conjecture~\ref{conj:3n}.
\begin{figure}[ht]
    \centering
    \scalebox{0.7}
    {% \resizebox{!}{3cm}{
\begin{tikzpicture}[scale = .5]
% K*3: um triangulo e três vértices independentes conectados aos três vértices do triângulo. 
    \foreach \i in {1,...,3}{
	\node(t\i) [black vertex] at (90+\i*360/3:1) {};	
    }
    \foreach \i in {1,...,3}{
	\node(i\i) [black vertex] at (90+\i*360/3:3) {};
    }	
    \draw[edge] (t1) -- (t2);
    \draw[edge] (t2) -- (t3);
    \draw[edge] (t3) -- (t1);
    \draw[edge] (i1) -- (t1);
    \draw[edge] (i1) -- (t2);
    \draw[edge] (i1) -- (t3);
    \draw[edge] (i2) -- (t1);
    \draw[edge] (i2) -- (t2);
    \draw[edge] (i2) -- (t3);
    \draw[edge] (i3) -- (t1);
    \draw[edge] (i3) -- (t2);
    \draw[edge] (i3) -- (t3);

% Octahedron
    \node(bot) [black vertex] at  (7,-1.5) {};
    \node(top) [black vertex] at  (7,3.5) {};
    \node(lef) [black vertex] at (5,0.5) {};
    \node(rig) [black vertex] at (9,1.5) {};
    \node(bac) [black vertex] at (7.5,0.5) {};
    \node(fro) [black vertex] at (6.5,1.5) {};
    \draw[edge] (bot) -- (lef);
    \draw[edge] (bot) -- (bac);
    \draw[edge] (bot) -- (rig);
    \draw[edge] (bot) -- (fro);
    \draw[edge] (lef) -- (fro);
    \draw[edge] (rig) -- (bac);
    \draw[edge] (top) -- (lef);
    \draw[edge] (top) -- (bac);
    \draw[edge] (top) -- (rig);
    \draw[edge] (top) -- (fro);
    \draw[edge] (lef) -- (bac);
    \draw[edge] (rig) -- (fro);
% Segundo grafo
%   \node(x1) [black vertex] at  (6.5,2) {}; % left
%   \node(x4) [black vertex] at (11.5,2) {}; % right
%   \node(x6) [black vertex] at    (8,1) {}; % bottom l
%   \node(x5) [black vertex] at   (10,1) {}; % bottom r
%   \node(x2) [black vertex] at    (8,3) {}; % top l
%   \node(x3) [black vertex] at   (10,3) {}; % top r
%   \node(uni) [black vertex] at   (9,5) {};
%   \foreach \i in {1,...,6}{
%       \draw[edge] (uni) -- (x\i);
%   }
%   \draw[edge] (x2) -- (x6);
%   \draw[edge] (x3) -- (x5);
%   \draw[edge,red] (x6) -- (x1);
%   \foreach \i in {1,...,5}{
%       \pgfmathtruncatemacro{\j}{\i + 1}
%       \draw[edge] (x\i) -- (x\j);
%   }        
  \foreach \i in {1,...,3}{
    \foreach \j in {1,...,3}{
      \node(x\i\j) [black vertex] at (10+2*\i,-2+2*\j-1) {};
    }
    \foreach \j in {1,...,2}{
      \pgfmathtruncatemacro{\jj}{\j + 1}
      \draw[edge] (x\i\j) -- (x\i\jj);
    }
  }
  \foreach \j in {1,...,3}{
    \foreach \i in {1,...,2}{
      \pgfmathtruncatemacro{\ii}{\i + 1}
      \draw[edge] (x\i\j) -- (x\ii\j);
    }
  }
  \foreach \j in {1,...,2}{
    \foreach \i in {1,...,2}{
      \pgfmathtruncatemacro{\ii}{\i + 1}
      \pgfmathtruncatemacro{\jj}{\j + 1}
      \draw[edge] (x\i\j) -- (x\ii\jj);
      \draw[edge] (x\i\jj) -- (x\ii\j);
    }
  }
\end{tikzpicture}
%}}
    \hfill
    \scalebox{0.68}
    {% \resizebox{!}{3cm}{
\begin{tikzpicture}[scale = .45]
    % 4-connected, 2-cyclic n=7, m=16
    \foreach \i in {1,...,3}{
	\node(t\i) [black vertex] at (90+\i*360/3:3) {};	
    }
    \foreach \i in {1,...,4}{
	\node(i\i) [black vertex] at (135+\i*360/4:.9) {};
    }	
    \draw[edge] (t1) -- (t2);
    \draw[edge] (t2) -- (t3);
    \draw[edge] (t3) -- (t1);
    \draw[edge] (i1) -- (i2);
    \draw[edge] (i2) -- (i3);
    \draw[edge] (i3) -- (i4);
    \draw[edge] (i4) -- (i1);
    \draw[edge] (i1) -- (t1);
    \draw[edge] (i1) -- (t2);
    \draw[edge] (i2) -- (t2);
    \draw[edge] (i2) -- (t1);
    \draw[edge] (i3) -- (t3);
    \draw[edge] (i3) -- (t2);
    \draw[edge] (i4) -- (t3);
    \draw[edge] (i4) -- (t1);
    \draw[edge] (i4) -- (i2);

    % 4-connected 2-cyclic n=8, m=18
  \begin{scope}[shift={(8.5,1)}]
    \foreach \i in {1,...,4}{
	\node(t\i) [black vertex] at (\i*360/4:3) {};	
    }
    \foreach \i in {1,...,4}{
	\node(i\i) [black vertex] at (\i*360/4:1) {};
    }	
    \draw[edge] (t1) -- (t2);
    \draw[edge] (t2) -- (t3);
    \draw[edge] (t3) -- (t4);
    \draw[edge] (t4) -- (t1);
    \draw[edge] (i1) -- (i2);
    \draw[edge] (i2) -- (i3);
    \draw[edge] (i3) -- (i4);
    \draw[edge] (i4) -- (i1);
    \draw[edge] (i1) -- (t1);
    \draw[edge] (i2) -- (t2);
    \draw[edge] (i3) -- (t3);
    \draw[edge] (i4) -- (t4);
    \draw[edge] (i1) -- (i3);
    \draw[edge] (t1) -- (i4);
    \draw[edge] (t1) -- (i2);
    \draw[edge] (t3) -- (i4);
    \draw[edge] (t3) -- (i2);
    \draw[edge, bend right=120] (t2) to (t4);
  \end{scope}

  % 4-connected 2-cyclic n=8, m=18
  \begin{scope}[shift={(18, 1)}]
    \foreach \i in {1,...,6}{
	\node(c\i) [black vertex] at (\i*360/6:3) {};	
    }
	\node(a1) [black vertex] at ($(c1)!.6!(c5)$) {};
	\node(a2) [black vertex] at ($(c2)!.4!(c4)$) {};	
	\draw[edge] (c1) -- (c2);
    \draw[edge] (c2) -- (c3);
    \draw[edge] (c3) -- (c4);
    \draw[edge] (c4) -- (c5);
    \draw[edge] (c5) -- (c6);
    \draw[edge] (c6) -- (c1);
    \draw[edge] (a1) -- (c1);
    \draw[edge] (a1) -- (c2);
    \draw[edge] (a1) -- (c3);
    \draw[edge] (a1) -- (c4);
    \draw[edge] (a1) -- (c5);
    \draw[edge] (a1) -- (c6);
    \draw[edge] (a2) -- (c1);
    \draw[edge] (a2) -- (c2);
    \draw[edge] (a2) -- (c3);
    \draw[edge] (a2) -- (c4);
    \draw[edge] (a2) -- (c5);
    \draw[edge] (a2) -- (c6);	
  \end{scope}
\end{tikzpicture}
%}}
    \caption{Left: Three 3-connected 2-cyclic graphs, two with 6 vertices and 12 edges and one with 9 vertices and 20 edges.
    Right: Three 4-connected 2-cyclic graphs, one with 7 vertices and 16 edges, and two with 8 vertices and~18 edges.}
    \label{fig:2n-edges}
\end{figure}

\section{Final remarks}

Several questions remain open. 
Of course it would be nice to settle Conjecture~\ref{conj:3n}, 
or to obtain an improvement on Theorem~\ref{thm:teo}\ref{thm:9-4}. 
Proving Conjecture~\ref{conj:73} or finding a family of 4-connected 2-cyclic graphs on $n$ vertices with less than $\frac73n$ edges would also be interesting. 

The study of $k$-cyclic graphs with $k$ more than~2 seems to be a possible way to achieve better results towards Conjecture~\ref{conj:3n}.
Our exposition points out that we barely use the forest cut requirement for sets larger than~2 in the current results.

\bibliography{main.bib}
\end{document}